\newcommand{\Z}{\mathcal{Z}}
\DeclareMathOperator{\Span}{Span}
\DeclareMathOperator{\im}{Im}
\DeclareMathOperator{\pr}{pr}
\DeclareMathOperator{\degr}{deg}
\DeclareMathOperator{\ke}{Ker}
\DeclareMathOperator{\Pic}{Pic}
\newcommand{\M}{\overline{\mathcal{M}}}
\newcommand{\C}{\mathbb{C}}
\newcommand{\U}{\mathcal{U}}
\newcommand{\F}{\mathcal{F}}
\newcommand{\PP}{\mathbb{P}}
\newcommand{\PPT}{\mathbb{T}}
\newcommand{\OO}{\mathcal{O}}
\newtheorem{theorem}{Theorem}[section]
\newtheorem{thm}[theorem]{Theorem}
\newtheorem{lemm}[theorem]{Lemma}
\newtheorem{prop}[theorem]{Proposition}
\newtheorem{propo}[theorem]{Proposition}
\newtheorem{defi}[theorem]{Definition}
\newtheorem{remark}[theorem]{Remark}
\numberwithin{equation}{theorem}
\title []{spaces of conics on low degree complete intersections}
\author {xuanyu pan}
\address{Department of Mathematics, Washington University in St.Louis, St.Louis, MO 63130}
\email{pan@math.wustl.edu}
\date{\today}
\begin{document}

\maketitle

\begin{abstract}
Let $X$ be a smooth complete intersection contained in $\PP^n_{\C}$ and of low degree. We consider conics contained in $X$ and passing through two general points of $X$. We show that the moduli space of these conics is a smooth complete intersection in a projective space. The main ingredients of the proof are a criterion for characterizing when a smooth projective variety is a complete intersection in a projective space, the Grothendieck-Riemann-Roch theorem, and the geometry of spaces of conics.
\end{abstract}
\tableofcontents

\section {Introduction}

In this paper,  we work over the complex numbers $\mathbb{C}$. Let $X$ be a smooth projective variety in $\PP^n_{\mathbb{C}}$, and let $\M_{0,2}(X,2)$ be the Kontsevich moduli space parametrizing the data $(C,f,x_1,x_2)$ of
\begin{enumerate}
\item  a proper, connected, at-worst-nodal, arithmetic genus $0$ curve $C$,
\item an ordered collection $x_1$ and $x_2$ of distinct smooth points of $C$,
\item and a morphism $f:C\rightarrow X$ whose image has degree $2$
\end{enumerate}
such that $(C,f,x_1,x_2)$ has only finitely many automorphisms. 
For the space $\M_{0,2}(X,2)$, we have an evaluation morphism (cf. \cite{FP})
 \begin{equation} \label{fiberf}
\text{ev}:\M_{0,2}(X,2) \rightarrow X\times X, \ \ (C,f,x_1,x_2)
\mapsto (f(x_1),f(x_2)).
\end{equation}

In the following,  we say that a complete intersection of codimension $k$ is of type $(c_1,c_2,\ldots,c_k)$ if it is defined by $k$ homogeneous polynomials $F_j$ of degree $c_j$ for $j=1,\ldots,k$. We use the notation $(c_1,c_2,\ldots,c_k)-(c_1,c_2,\ldots,c_j)$ to represent the $(k-j)$-tuple $(c_{j+1},c_{j+2},\ldots,c_k)$.\\
\\
\textbf{Conditions.}
Throughout this paper, we always assume that
\begin{itemize}
\item $X$ is a smooth complete intersection in $\PP^n_{\C}$ of type $(d_1,\ldots,d_c)$,
with $c\leq n$ and $d_i\geq 2$ such that $n\geq2 \sum\limits_{i=1}^c d_i-c+1$;
\item and $p$ and $q$ are general points of $X$;
\item and $\F$ is the general fiber of the evaluation map ev over $(p,q)\in X\times X$, cf. (\ref{fiberf}).
\end{itemize}
Since $p$ and $q$ are general, the line $\overline{pq}$ is not contained in $X$. The stable maps parametrized by $\F$ are immersions and their images in $X$ are reducible conics passing through $p$ and $q$. Therefore, we hope it would not cause any confusion if we consider $\F$ as the Hilbert scheme parametrizing conics contained in $X$ and passing through $p$ and $q$.


In topology, a path connected space is simply connected if the space of based paths is path connected. The topological obstruction theory predicts that there exists a section of a Serre fibration if its fibers are simply connected and the base is a CW-complex of dimension at most two. In algebraic geometry, de Jong and Starr \cite{DS} introduce an algebraic and geometric analogue of simple connectedness, namely, rational simple connectedness (see \cite[Theorem 1.1]{DS}).  Rational simple connectedness plays a similar role as simple connectedness in the topological obstruction theory. Namely, the rational simple connectedness of a smooth projective variety $X$ in $\PP^n_{\mathbb{C}}$ implies some arithmetic properties of $X$, such as weak
approximation and the existence of rational points over function fields of surfaces (\cite{DS}, \cite{DS2}, and \cite{BH}). In general, the first step of showing the rational simple connectedness of $X$ is to show the rational connectedness of $\F$. In this paper, we show that the fiber $\F$ is a smooth complete intersection in a projective space, see Theorem \ref{mainthmb}. As a result, it gives rise to an alternative proof of the rational connectedness of $\F$ for a low degree complete intersection $X$, cf. \cite[Lemma 6.5]{DS}.

On the other hand, Qile Chen and Yi Zhu \cite{ZC} recently use the results in this paper and $\mathbb{A}^1$-curves to prove strong approximation for low degree affine complete intersections over function fields, which is considered to be more difficult to show than weak approximation.

The main theorem of this paper is the following.
\begin{thm}\label{mainthmb}
With the conditions as above, the general fiber $\F$ is of the expected dimension $n+1-2\sum\limits_{i=1}^{c} d_i+c.$ Denote by $\varphi$ the morphism
\begin{equation}\label{vmap}
\varphi :\F\rightarrow \mathbb{P}^{n-2}=\mathbb{P}^{n}/\Span(p,q)
\end{equation}
associating to a stable map $[f:C \rightarrow X,p,q]$ $\in \F$ the point \[[\Span(f(C))]\in \mathbb{P}^{n-2}=\mathbb{P}^{n}/\Span(p,q).\]
Then the morphism $\varphi :\F\rightarrow \mathbb{P}^{n-2}$ is an immersion and the general fiber $\F$ is a smooth complete intersection in $\mathbb{P}^{n-2}$ of type\[(1,1,2,2,\dots,d_1-1,d_1-1,d_1; 1,1,2,2,\dots,d_2-1,d_2-1,d_2;\ldots\]
\[;1,1,2,2,\dots,d_c-1,d_c-1,d_c)-(1,1,2).\]

\end{thm}

For a formal definition of $\varphi$, we refer to \cite[Lemma 6.4]{DS}. 

There is also an interesting application of this theorem in enumerative geometry. Namely, if the number of conics contained in $X$ and passing through $p$ and $q$ is finite, then the number is equal to the degree of $\F$ via the immersion $\varphi$. This number can be easily calculated by the theorem. It can also be  calculated by using quantum cohomology, cf. \cite[Corollary in Page 8]{BV}.

The proof of Theorem \ref{mainthmb} will be given at the end of this paper. Here is a rough sketch. In Section 3, we prove that $\F$ can be embedded into a projective space by a canonical map $\varphi$. We also prove that the boundary divisor $\Delta$ of $\F$ is a smooth complete intersection in the projective space under this embedding. In Section 4, we study the geometry of spaces of conics contained in a projective space.

In Section 5, we prove that the total space $\U$ of the universal family $\U\rightarrow \F$ of conics is smooth. In Section 6, we apply the Grothendieck-Riemann-Roch theorem to $\U\rightarrow \F$. We prove an identity relating divisors of $\F$ in Section 6, see Lemma \ref{twotimes}. Then we use the results in Section 4 to deduce an integral version of this identity, see Proposition \ref{asdiv}.

In Section 7, we give a criterion of when a projective variety is a complete intersection in a projective space in Proposition \ref{mainprop1}. Roughly speaking, we show that a smooth projective variety $Y(\subseteq \PP^n)$ which contains a smooth divisor $X$ is a complete intersection in $\PP^n$ if
\begin{itemize}
\item $X$ is a complete intersection in $\PP^n$ of type $(d_1,\ldots,d_c)$,
\item and the divisor $X$ is the intersection of $Y$ and a hypersurface of degree $d$ in $\PP^n$ where $d\in \{d_1,\ldots,d_c\}$.
\end{itemize}
We apply this proposition to the case $(X,Y)=(\Delta, \F)$. Using Proposition \ref{asdiv}, we verify that $(\Delta, \F)$ satisfy the conditions of Proposition \ref{mainprop1}. Therefore, we conclude that $\F$ is a smooth complete intersection in $\PP^{n-2}$.
\\
\\ 

To avoid repetitions, we fix the following notation throughout this paper:\\
\begin{itemize}
\item Denote by $\Delta$ the boundary divisor of $\F$ parametrizing reducible conics.
\item Denote the universal conic bundle by $\pi:\U \rightarrow \F$ and the natural map from $\U$ to $\F$ by $f : \U \rightarrow X.$ The universal sections are denoted by $\sigma_i :\F\rightarrow \U~(i=0,1)$ with $\im(f\circ \sigma_0)=\{p\}$ and $\im(f\circ \sigma_1)=\{q\}.$

\item Denote by $\Span(C)$ the unique 2-plane such that $C$ is a subscheme of this 2-plane where $C$ is a conic in $\PP^n$.
\item Denote by $\PPT_{X,x}$ the closure of $T_{X,x}$ in $\PP^n$ where $T_{X,x}$ is the tangent space to $X$ at a point $x\in X$. We say that $\PPT_{X,x}$ is the projective tangent space to $X$ at $x$.
\item Denote by $\overline{st}$ (or $\Span(s,t)$) the line passing through two points $s$ and $t$ in $\PP^n$.
\item Denote by $\PP(V)/\PP(W)$ the projective space $\PP(V/W)$ for a flag $(W\subseteq V)$ of a vector space $V$, e.g. $\PP^n/\Span(C)$, $\PP^n/\Span (s,t)$.

\end{itemize}

\section{Preliminaries}\label{s3}

Most results in this section are well known to experts. For the sake of completeness, we sketch some of the proofs for these results. Recall that $X$ is a smooth complete intersection of type $(d_1,\ldots,d_c)$ in $\PP^n$. Assume that $X=\bigcap\limits_{i=1}^c X_i$ where $X_i$ is a hypersurface of degree $d_i$ in $\PP^n$. 
\begin{lemm} \label{cilines}
Let $L_x$ be the union of lines contained in $X$ and passing through $x\in X$. Assume that $X$ is covered by lines. The space $L_x (\subseteq X)$ is a complete intersection in $\PP^n$ of type
\[\left( \begin{array}{c}
1,2,\dots,d_1-1,d_1;\\
 1,2,\ldots,d_2-1,d_2;\\
 \ldots\\
 \ldots\\
 1,2,\dots,d_c-1,d_c
\end{array} \right)\] if $x$ is a general point of $X$. Moreover, the first $"1"$ in the i-th row represents the linear form defining the projective tangent hyperplane $\PPT_{X_i,x}$ to $X_i$ at $x$ and $"d_i"$ in the i-th row represents the polynomial of degree $d_i$ defining $X_i$.
\end{lemm}
The proof of this lemma is based on local calculations. See the proof of \cite[Lemma 2.1]{Cubic} . One can show Lemma \ref{cilines} in the same way.\\

We have the following lemma.
\begin{lemm}\cite[Lemma 5.1]{DS} \label{lemm01}
Assume that $n\geq2 \sum\limits_{i=1}^c d_i-c+1$. The general fiber $\F$ is smooth and of the expected dimension $n+c+1-2\sum \limits_{i=1}^c d_i.$
Moreover, the intersection of $\F$ and the boundary of $\M_{0,2}(X,2) $ is a simple normal crossing divisor $\Delta$.
\end{lemm}

\begin{prop}\label{sm0}
Assume that $n\geq2 \sum\limits_{i=1}^c d_i-c+1$.
\begin{enumerate}
\item The general fiber $\F$ is a smooth variety of the expected dimension \[n+c+1-2\sum \limits_{i=1}^c d_i.\] \\
\item The boundary divisor $\Delta$ is a smooth complete intersection in $\mathbb{P}^n$ of type
\begin{equation}\label{type}
\left( \begin{array}{c}
 1,1,2,2,\dots,d_1-1,d_1-1,d_1;\\
  1,1,2,2,\ldots,d_2-1,d_2-1,d_2;\\
\ldots\\
\ldots\\
1,1,2,2,\dots,d_c-1,d_c-1,d_c.
 \end{array}\right)
 \end{equation}
\end{enumerate}
 \end{prop}
\begin{proof}
The first assertion follows from Lemma \ref{lemm01}. 
The second assertion follows from the fact \cite[Page 83 (2)]{DS2}. We sketch a proof of the second assertion here for the sake of completeness.

Let $L_p$ (resp. $L_q$) be the union of lines contained in $X$ and passing through the point $p\in X$ (resp. $q \in X$) (cf. Lemma \ref{cilines}). Since $p$ (resp. $q$) is a general point of $X$, one can show that $L_p(\subseteq X)$ (resp. $L_q$) is a complete intersection in $\PP^n$, cf. Lemma \ref{cilines}.  A reducible conic $C$ contained in X and passing through $p$ and $q$ is uniquely determined by the node point. Namely, the conic $C$ is the union $\overline{Qp}\cup \overline{Qq}$ where $Q$ is the node point of $C$. On the other hand, the boundary divisor $\Delta$ parametrizes reducible conics contained in $X$ and passing through $p$ and $q$. It follows that $\Delta=L_p \cap L_q$. By Lemma \ref{lemm01}, we conclude that $L_p$ and $L_q$ intersect properly.
The second assertion follows.
\end{proof}

\section{An Embedding map and Projective Geometry }


In this section, we show that the morphism $\varphi$ is a closed immersion. Recall that $X$ is a smooth complete intersection $X_1\cap \dots \cap X_c$ in $\mathbb{P}^n$,  where $X_i$ is a hypersurface of $d_i$ in $\mathbb{P}^n$ for $i=1,\ldots,c$.
\begin{lemm} \label{var}
With the notation as above, there is a commutative diagram as follows:

\[
\xymatrix{
\U \ar[d]^{\pi} \ar[r]^f &X\ar@{.>}[dr]^{\pr|_{X}} \ar@{.>}[r]^i &\mathbb{P}^{n}-\overline{pq} \ar[d]^{\pr} \\
\F \ar[rr]^{\varphi} & &\mathbb{P}^{n-2}
}
\]
where \begin{enumerate}
        \item the map $\pr$ is the projection from the line $\overline{pq}$ to a projective subspace $\mathbb{P}^{n-2}$,
        \item and the rational map $i$ is the natural rational inclusion,
        \item and $\PP^{n-2}=\PPT_{X_1,p}\cap \PPT_{X_1,q}$ where $X=X_1\cap \dots \cap X_c$.
      \end{enumerate}
\end{lemm}

\begin{proof}
Note that $i$ is defined on $X-\overline{pq}$. Let $u\in \U$ be a point whose image $f(u)$ is not on the line $\overline{pq}$. Therefore, the line $\overline{pq}$ and the point $f(u)$ span a 2-plane $\Span(f(u),\overline{pq})$. Since $\mathbb{P}^{n-2}$ in the diagram above is the intersection $\PPT_{X_1,p}\cap \PPT_{X_1,q}$ of the projective tangent hyperplanes
to $X_1$, the projection map $\pr|_X$ maps $f(u)$ to the point $\Span(f(u),\overline{pq})\cap \mathbb{P}^{n-2}$. On the other hand, the point $$\varphi(\pi(u))\in \PP^{n-2}=\PP^n/\Span(p,q)$$parametrizes the 2-plane $\Span(C)$ where $C$ is a conic parametrized by $\pi(u)$. Therefore, the point $f(u)$ is on $C$ and $\Span(C)=\Span(f(u),\overline{pq})$. In other words, we have $\pr|_X(f(u))=\varphi(\pi(u))$. We have proved that the diagram is commutative.
\end{proof}

\begin{lemm} \label{inj}
The map $\varphi :\F\rightarrow \mathbb{P}^{n-2}=\mathbb{P}^{n}/\Span(p,q)$ is injective.
\end{lemm}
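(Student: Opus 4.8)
The plan is to show $\varphi$ separates points of $\F$, i.e.\ that two distinct conics through $p$ and $q$ span distinct planes in $\PP^n$. Suppose $(f_1:C_1\to X,p,q)$ and $(f_2:C_2\to X,p,q)$ are two points of $\F$ with the same image under $\varphi$, meaning $\mathrm{Span}(f_1(C_1))=\mathrm{Span}(f_2(C_2))=:P$, a $2$-plane in $\PP^n$ containing the line $l=\overline{pq}$. The key observation is that, by the commutative diagram of Lemma \ref{var} (with the target $\PP^{n-2}$ realized as the intersection of the tangent spaces $T_pX_1\cap T_qX_1$), everything is happening inside a single plane $P\subseteq\PP^n$: both conics $C_1,C_2$ are curves of degree $\le 2$ contained in $P\cap X$. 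I would then analyze the intersection $P\cap X_1$, a plane curve of degree $d_1\ge 2$ inside $P\cong\PP^2$.

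First I would rule out degenerate configurations: since $p,q$ are general points of $X$, the line $l=\overline{pq}$ is not contained in $X$ (otherwise the family of lines through a general point would dominate $X$, contradicting $X$ not being linear/a quadric in the relevant range), and more generally one expects $P\cap X_1$ to be a reduced plane curve of degree $d_1$ containing the two conics only if $d_1$ is large enough — but actually the cleaner route is: $C_i$ is a conic (degree $2$ cycle) through $p$ and $q$, possibly a smooth conic, a pair of lines, or a double line; in every case $C_i\subseteq P\cap X$. Now $P\cap X_1$ is a degree-$d_1$ curve in $P$, and since $X\subseteq X_1$ we have $C_1,C_2\subseteq P\cap X_1$. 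If $C_1\ne C_2$ as $1$-cycles, then $C_1\cup C_2$ is a curve of degree $3$ or $4$ in $P$; for this to be contained in the degree-$d_1$ plane curve $P\cap X_1$ we would need $d_1\ge 3$, and even then $C_1,C_2$ would have to be components of $P\cap X_1$. The residual curve $R_i := \overline{(P\cap X_1)\setminus C_i}$ then has degree $d_1-2$. I would use the generality of $p,q$ together with the constraint coming from the other hypersurfaces $X_2,\dots,X_c$ to force $R_1=R_2$, hence $C_1=C_2$; concretely, $P\cap X$ is cut out in $P\cap X_1$ by the restrictions of the equations of $X_2,\dots,X_c$, and since $C_i$ must be the part of $P\cap X_1$ lying on $X$, passing through the two prescribed general points and meeting all the extra equations, there is no room for a second choice. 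The case $d_1=2$ forces $P\cap X_1$ to have degree $2$, so it cannot contain two distinct conics at all, giving the claim immediately.

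I would make the argument uniform by phrasing it via the residual-curve correspondence already illustrated in Section 3: fixing the plane $P\supseteq l$ determines $P\cap X_1$, and then a conic through $p,q$ lying on $X$ inside $P$ is the same datum as its residual $(d_1-2)$-curve together with the data from $X_2,\dots,X_c$; generality of $(p,q)$ guarantees this residual datum is rigid, so at most one conic over a given plane. The main obstacle I anticipate is handling the \emph{reducible and nonreduced} conics cleanly — a point of $\F$ can be a stable map whose image is a line (with the map of degree $2$) or a union of two lines through $p$ and $q$ — and making sure that in those boundary cases the span $\mathrm{Span}(f(C))$ is still a well-defined $2$-plane and that the residual/generality argument still pins down the cycle $f_*[C]$ uniquely; this is exactly where one needs the hypothesis that $p,q$ are in general position and that $X$ is neither linear nor a quadric, and I would invoke \cite[Lemma 5.5]{DS} / the freeness of components from Lemma \ref{lemm01} to exclude the pathological configurations.
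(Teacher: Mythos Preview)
Your approach diverges from the paper's and does not close. The paper's argument is local and much shorter: by generality one may take $q$ outside the projective tangent hyperplane $T_pX_1$ (and symmetrically $p$ outside $T_qX_1$), so that the line $l=\overline{pq}$ meets the hypersurface $X_1$ with intersection multiplicity exactly $1$ at $p$ and at $q$. If two distinct conics $C,C'$ through $p,q$ lay in the same plane $P$, then both are subcurves of the degree-$d_1$ plane curve $P\cap X_1$, and at (at least) one of the two points $p,q$ they contribute two distinct local branches; hence $P\cap X_1$ is singular there and $l\subset P$ meets $X_1$ with multiplicity $\ge 2$ at that point --- a contradiction. No residual--curve bookkeeping, no appeal to $X_2,\dots,X_c$, and no case split on $d_1$ is needed.

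Your route has genuine gaps. The degree bound ``$C_1\cup C_2$ has degree $3$ or $4$, so one needs $d_1\ge 3$'' only says that two distinct conics \emph{can} sit inside $P\cap X_1$ once $d_1$ is moderately large; it does nothing to rule that out. The plan to then use the equations of $X_2,\dots,X_c$ to force uniqueness of the residual curve is not made into an argument, and it collapses when $c=1$ (the hypersurface case), which is squarely within the scope of the lemma. The residual correspondence of Section~3 is special to cubics: there the residual is a \emph{line} through a single determined point $r$, so uniqueness is automatic; for $d_1\ge 4$ the residual is a plane curve of degree $d_1-2$ and you give no mechanism pinning it down. Finally, your anticipated obstacle about reducible/nonreduced conics is handled for free by the paper's multiplicity argument --- the only wrinkle is when $C$ and $C'$ share the line through $p$, in which case they must differ in the line through $q$, and one runs the same transversality contradiction at $q$ instead.
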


\begin{proof}
Since X is smooth, the hypersurface $X_1$ is smooth at the point $p$. Let $\PPT_{X_1,p}$ be the projective tangent hyperplane to $X_1$ at $p$. It is clear that the complete intersection $X$ is linearly nondegenerate. In particular, the variety $X$ is not contained in $\PPT_{X_1,p}$. Therefore, the line $\overline{pq}$ is transversal to $\PPT_{X_1,p}$ if the point $q$ in $X$ is general. In this case, the multiplicity of the intersection point of $\overline{pq}$ and $X_1$ at $p$ is $1$.

Suppose that $\varphi$ is not injective. By Lemma \ref{var}, there are two distinct conics $C_1$ and $C_2$ contained in $X$ and passing through $p$ and$ q$. These two conics are lying in a 2-plane $P$. Since the line $\overline{pq}$ is transversal to $\PPT_{X_1,p}$, we have that \[\overline{pq}\not\subseteq X_1.\]In particular, the intersection $X_1\cap P$ is a reducible plane curve $D$.  It is clear that $C_1\cup C_2 \subseteq D$. We conclude that the multiplicity of the intersection $D\cap \overline{pq}$ at $p$ is at least $2$ since $C_1$ and $C_2$ pass through $p$. As a result, the multiplicity of the intersection of $\overline{pq}$ and $X_1$ at $p$ is at least $2$ since the intersection $\overline{pq}\cap X_1$ is the intersection $D\cap \overline{pq}$ in $P$. It is a contradiction.
\end{proof}

\begin{prop}\label{propemb}
The map $\varphi :\F\rightarrow \mathbb{P}^{n-2}=\mathbb{P}^{n}/\Span(p,q)$ is a closed immersion.

\end{prop}

\begin{proof}
By Lemma \ref{inj}, it suffices to show that the differential $d \varphi $ on the tangent space $T_c \F$ to $\F$ at $c\in \F$ is injective for every $c\in \F$. Let $C$ be the conic parametrized by the point $c\in \F$.

Take a nonzero vector $v \in T_c \F$. It induces a nonzero normal vector field $N_v\in H^0(C,N_{C/X}(-p-q))$ where $N_{C/X}$ is the normal bundle of $C$. This normal vector field corresponds to the first order deformation of the conic $C$ with fixed points $p$ and $q$.

Note that $\U$ is smooth (we postpone the proof in Section \ref{sectionofsm}), see Proposition \ref{sm1}. For a smooth point $s\in C\subseteq \U$, we can lift the vector $v$ (locally on $\U$) to a vector field $w$ (around $s$) of $T_{\U}|_{C}$  where $T_{\U}$ is the tangent bundle of $\U$. Let $proj$ be the natural map $T_{X}|_{C}\rightarrow N_{C/X}$. Note that the map $proj$ induces the map $H^0(proj)$
\[H^0(C,T_{X}|_C)\rightarrow H^0(C,N_{C/X})\]
 where $H^0(C,T_{X}|_C)$ is the space of the first order deformation of the morphism $C\subseteq X$ leaving domain and
target fixed (see \cite[Section 3.4.1]{deform}), $H^0(C,N_{C/X})$ is the space of the first order deformation of the morphism $C\subseteq X$ leaving target fixed (see \cite[Section 3.4.2]{deform} and \cite[Remarks 3.4.10]{deform}). 

Note that $df(w)\in H^0(U,T_X|_C)$ for some open neighborhood $U$ of $s$ is the first order deformation of the morphism $f|_U:U\rightarrow X$ leaving domain and target fixed, cf. \cite[Page 5]{CaJ}. It follows from the remarks in the previous paragraph that $H^0(proj)_s\circ df (w_s)$ is the normal vector $N_{v,s}$ of $N_v$ at the point $s$, i.e., \[H^0(proj)_s\circ df (w_s)=N_{v,s},\]since the normal vector field $N_{v}$ is induced by the first order deformation (corresponding to $v$) of the conic $C$, where $w_s$ is the vector field $w$ at $s$. In particular, we have that $d\pi(w_s)=v$ for any smooth point $s\in C$. By Lemma \ref{var}, it follows that \[d \varphi (v)=d\varphi (d (\pi)(w_s))=d(\pr|_X)(df(w_s)).\]



We claim that the vector $df(w_s)\in T_{X,s}$ points out of the 2-plane $\mathbb{P}^2=\Span(C)$. If the vector $df(w_s)\in T_{X,s}$ is on the 2-plane $\Span(C)$, then $df(w_s)$ and the tangent vector $T_{C,s}$ to $C$ at the point $s$ span the 2-plane $\Span(C)$. In particular, we have $\Span(C)\subseteq \PPT_{X,s}$. Since the point $s$ is a smooth point of $C$, we have $\Span(C)\subseteq \PPT_{X,p}$ by specializing $s$ to $p$. On the other hand, the line $\overline{pq}$ ($\subseteq \Span(C)$) is not contained in $\PPT_{X,p}$ since $p$ and $q$ are two general points of $X$. It is a contradiction. We have proved the claim.

Therefore, the vector $d (\pr|_X)(df(w_s))$ is nonzero. In other words, the differential $d \varphi $ on the tangent space $T_c \F$ to $\F$ at $c$ is injective.
\end{proof}

\begin{lemm}\label{delci}
The boundary divisor $\Delta$ is a complete intersection in $\PP^{n-2}$, with respect to the immersion $\varphi |_{\Delta}:\Delta \rightarrow \mathbb{P}^{n-2}$, and of type \[(2,2,\dots,d_1-1,d_1-1,d_1; \dots;1,1,2,2,\dots,d_c-1,d_c-1,d_c)\] (just take out of the first two "1" from the tuple in Proposition \ref{sm0}).
\end{lemm}

\begin{proof}
Let us briefly recall how to identify $\Delta$ as a complete intersection in $\PP^n$, cf. Proposition \ref{sm0}. The divisor $\Delta$ parametrizes reducible conics contained in $X$ and passing through $p$ and $q$. Let $C$ be a reducible conic parametrized by $\Delta$. We have that $C=l_p\cup l_q$ where $l_p$ (resp. $l_q$) is a line contained in $X$ and passing through p (resp. q).

The lines $l_p$ and $l_q$ intersect at a point $Q$. It is clear that the reducible  conic $C$ is $\overline{Qp}\cup \overline{Qq}$ where $\overline{Qp}$ (resp. $\overline{Qq}$) is the line $l_p$ (resp. $l_q$). In particular, the intersection point $Q$ determines the reducible  conic $C$. Let  $L_p$ (resp. $L_q$) be the union of the lines contained in $X$ and passing through $p$ (resp. $q$). The intersection $L_p\cap L_q$ parametrizes reducible conics passing through $p$ and $q$. By Proposition \ref{sm0} and its proof, we have that
\begin{itemize}
\item $\Delta=L_p\cap L_q$,
\item and $\Delta$ is a smooth complete intersection of type (\ref{type}) in $\PP^n$.
\end{itemize}
By Lemma \ref{cilines}, the first two $"1"$ in the first row of the tuple (\ref{type}) in Proposition \ref{sm0} represent the linear forms defining the projective tangent hyperplanes $\PPT_{X_1,p}$ and $\PPT_{X_1,q}$. Since $\PP^{n-2}$ in \ref{var} is the intersection of $\PPT_{X_1,p}$ and $\PPT_{X_1,q}$, we have proved the lemma.


\end{proof}


\section{The Geometry of Parameter Spaces of Conics} \label{sectionofconics}
In the following, a reduced conic refers to a smooth conic or a reducible conic. The main result of this section is Lemma \ref{ktimes}. The presentations of the proofs in this section are suggested by the anonymous referee. Denote by $Sch/\C$ the category of $\C$-schemes.

\begin{defi}
Let $M$ be a functor from the category $Sch/\C$ to the category of sets as follows:
\[M:Sch/\C\rightarrow Sets\]
associates to a scheme $S$ the set
$M(S)= \{ \pi_1 :C\rightarrow S\}$
where $\pi_1:C\rightarrow S$ is a flat family of conics contained in $\PP^n_S$ and passing through $p$ and $q$.
\end{defi}
In other words, the moduli functor $M$ parametrizes conics contained in $\PP^n$ and passing through $p$ and $q$.

\begin{lemm}\label{lemM}
The functor $M$ is representable by a scheme. We denote it by $M$ as well. The scheme $M$ is a $\PP^3$-bundle over $\PP^{n-2}$.
\end{lemm}

\begin{proof}


The lemma follows from the following observation. It is well known that a conic $C$ in $\PP^n$ is contained in a unique projective 2-plane $\Span(C)=\PP^2$ as a subscheme. On the other hand, it is clear that $\PP^{n}/\overline{pq}=\PP^{n-2}$ parametrizes 2-planes containing $\overline{pq}$. Note that conics contained in a 2-plane $\PP^2$ and containing $p$ and $q$ are parametrized by $\PP^3 $. It follows that $M$ is a $\PP^3$-bundle over $\PP^{n-2}$.

\end{proof}
We say that a conic in $\PP^n$ is good if it is smooth or the line $\overline{pq}$ is not one of its components. Otherwise, we say that the conic is bad. In particular, we have that $\overline{pq}\subseteq C$ for a bad conic $C$. Note that the space of conics contained in a 2-plane and passing through $p$ and $q$ is $\PP^3$ and the locus of bad conics in this 2-plane is $\PP^2(\subseteq \PP^3)$. It follows that the locus $B$ of bad conics in $M$ is a divisor representing the relative $\OO(1)$ of the $\PP^3$-bundle $M$ over $\PP^{n-2}$. It implies the following lemma.
\begin{lemm} \label{LemmATB}
With the notation as above, there exists an open subscheme $M^{o}$ of $M$ parameterizing reduced good conics contained in $\PP^n$. Moreover, we have that $M^{o}=M-B$ is an $\mathbb{A}^3$-bundle over $\PP^{n-2}$.

\end{lemm}

\begin{lemm}\label{ktimes}
Let $\Delta_1$ be the boundary divisor of $M^o$ parametrizing good reducible conics contained in $\PP^n$ and passing through $p$ and $q$. The \text{Picard} group $\Pic(M^o)$ of $M^o$ is $\mathbb{Z}$. In particular, we have that\[[\Delta_1]=m\epsilon \text{ and }\psi^*( \mathcal{O}_{\mathbb{P}^{n-2}}(1))=h\epsilon\] for some integers $m$ and $h$, where $\epsilon$ is a generator of $\Pic(M^{o})$.

\end{lemm}
\begin{proof}
The lemma follows from Lemma \ref{LemmATB}.
\end{proof}

\section{The Smoothness and Chern Classes} \label{sectionofsm}
In this section, we prove the smoothness of the universal bundle $\U$ by local calculations. We also prove some relations between the Chern classes of $\U$, $\F$ and the algebraic cycle $[\Z]$ associated to the singular locus $\Z$ of the map $\pi:\U\rightarrow \F$. The results in this section are preparations for applying the Grothendieck-Riemann-Roch theorem in the next section. We suggest the reader to skip this section on a first reading and return back if necessary.

\begin{defi}
With the same notation as before, we consider the universal family $$\pi:\U\rightarrow \F$$of conics. The singular locus $\Z$ of $\pi$ is defined by the first fitting ideal of the relative differential sheaf $\Omega_{\U/\F}^1$.
\end{defi}
It is clear that $\Z$ is the locus of the nodal points of the family $\U\rightarrow \F$ of conics.
\begin{lemm} \label{dif} \label{lemm11} \label{localnodal}
Suppose that $z\in \U$ is the nodal point of the reducible conic $\U_{\pi(z)}$. Let $\widehat{\OO_{\U,z}}$ be the completion of the local ring $\OO_{\U,z}$ at $z$. We have that\[\widehat{\OO_{\U,z}}=\widehat{\OO_{\F,\pi(z)}}[[x,y]]/(xy-a)\] where $a$ is an element in $\widehat{\OO_{\F,\pi(z)}}$. The singular locus $\Z$ is defined by the ideal $(x,y)$ in $\widehat{\OO_{\U,z}}$. Moreover, we have that \[\Omega_{\U/\F}^1=\omega_{\U/\F}\otimes I_{\Z}.\]where $\omega_{\U/\F}$ is the dualizing sheaf of $\pi$ and $I_{\Z}$ is the ideal sheaf of $\Z$
\end{lemm}

\begin{proof}
The first statement is well known. The analytic version follows from Proposition 2.1 in \cite[Chapter X]{GH} . Since the general fibers of $\pi$ are smooth conics contained in $X$, the second statement follows from the identity in the paper \cite[Page 101 ii)]{D}.
\end{proof}

\begin{prop}\label{sm2}
The singular locus $\Z$ is a smooth subvariety of codimension 2 in $\U$. Moreover, it is isomorphic to the boundary divisor $\Delta$ via the morphism $\pi$.
\end{prop}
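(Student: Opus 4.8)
The plan is to work entirely in the étale-local (or analytic-local) model for $\pi:\U\to\F$ supplied by Lemma \ref{lemm11}. First, near a node of a fibre of $\pi$, the family $\U$ is isomorphic over $\F$ to the subscheme of $\C^{2}\times\F$ with equation $xy=f$, where $f\in\OO_{\F}$ has germ in the maximal ideal. As observed in the proof of Proposition \ref{sm1}, the boundary divisor $\Delta$ is smooth (Proposition \ref{sm0}) and locally cut out by $f$; since a smooth divisor is reduced, $f$ differs by a unit from a member $t_{1}$ of a regular system of parameters $t_{1},\dots,t_{k}$ on $\F$, and we may take $f=t_{1}$. In these coordinates $\Omega_{\U/\F}$ is presented on the generators $dx,dy$ by the single relation $y\,dx+x\,dy$, so its first Fitting ideal is $(x,y)$; hence, locally, $\Z=V(x,y)$. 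Away from the nodes $\Omega_{\U/\F}$ is invertible, its first Fitting ideal is the unit ideal, and $\Z$ is empty there. Thus $\Z$ is cut out locally by two members of a coordinate system on the smooth variety $\U$ (Proposition \ref{sm1}), so $\Z$ is smooth of pure codimension $2$ in $\U$.

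For the assertion that $\pi$ induces an isomorphism $\Z\cong\Delta$, note first that on $\Z=V(x,y)$ the relation $xy=f$ forces $f=0$, so $\pi$ carries $\Z$ scheme-theoretically into $\Delta=V(f)$. In the local model $\{x=y=0\}=\mathrm{Spec}\,\OO_{\F}/(f)$, and $\pi|_{\Z}$ is the closed immersion of this into $\mathrm{Spec}\,\OO_{\F}$ followed by the identification of its image with $V(f)$; in particular $\pi|_{\Z}$ is a local isomorphism at every point of $\Z$. Since $\Z$ is closed in the projective variety $\U$, the morphism $\pi|_{\Z}:\Z\to\Delta$ is proper, and being a local isomorphism it is finite and flat; its degree is the length of any fibre, which is $1$ because a general point of $\Delta$ parametrises a stable map whose source is a union of two $\PP^{1}$'s meeting at exactly one node (a contracted component is impossible since the two marked points map to the distinct points $p\neq q$, and a degree-2 cover of a line has image degree $1$, not $2$), so the unique node of that source curve is the only point of $\Z$ above it. A finite flat morphism of degree $1$ is an isomorphism, which completes the argument.

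The computational heart — the Fitting-ideal calculation and the identification of $\pi|_{\Z}$ with a linear projection in the local model — is immediate from Lemma \ref{lemm11} together with the smoothness of $\Delta$. The only point requiring genuine care is verifying that a general (hence, by connectedness and the covering property, every) fibre of $\pi$ over $\Delta$ has exactly one node; this rests on the stable-map combinatorics for $\M_{0,2}(X,2)$, namely ruling out contracted components and multiple covers using the two marked points $p,q$ and the image-degree convention in the definition of the Kontsevich space, and is where I expect the main, though modest, obstacle to lie.
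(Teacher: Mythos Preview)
Your proof is correct and follows essentially the same approach as the paper: both invoke the local model of Lemma \ref{lemm11} to compute the first Fitting ideal of $\Omega_{\U/\F}$ as $(x,y)$, and then observe that $V(x,y)$ inside $\{xy=f\}\subset\C^{2}\times\F$ projects isomorphically onto $V(f)=\Delta$. Your finite-flat degree-one argument for the global isomorphism $\Z\cong\Delta$ is more explicit than the paper's terse local-to-global passage, but the substance is the same.
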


\begin{proof}
Let $z$ be a point of $\Z$. By Lemma \ref{lemm11}, the total family $\U$ is defined by the equation \[xy-a=0\] in a neighborhood of the point $z=(0,0, s)\in \mathbb{C}^2\times \F $, where $a$ is an analytic function on $\F$ and $x,y$ are coordinates of $\mathbb{C}^2$. The singular locus $\Z$ in $\U$ is defined by $$x=0 \text{~and~} y=0.$$ Therefore, the locus $\Z$ is defined by \[xy-a=0, x=0\text{~and~}y=0\]in a neighborhood of $(0,0, s)\in \mathbb{C}^2\times \F$. We conclude that the locus $\Z$ is isomorphic (via the morphism $\pi$) to the analytic subspace of $\F$ defined by $a=0$. On the other hand, the boundary divisor $\Delta$ in $\F$ is defined by $a=0$ as well. We have proved the lemma.
\end{proof}

\begin{prop}\label{sm1}
The universal family $\U$ is a smooth projective variety.
\end{prop}

\begin{proof}

Since $\F$ is smooth, it suffices to show that $\U$ is smooth at $\Z$. Let $z$ be a point of $\Z$. By Lemma \ref{lemm11} and the proof of Proposition \ref{sm2}, the total family $\U$ is defined by \[xy-a=0\text{~(}a\in \OO_{\F}\text{)}\]in a neighborhood of the point $z=(0,0,s)\in \mathbb{C}^2\times \F$ and the boundary divisor $\Delta$ is locally defined by $a=0$ in $\F$.

To prove the proposition, it suffices to show that the Jacobian
\[(y,x,-\frac{\partial a}{\partial z_1},\ldots, -\frac{\partial a}{\partial z_n})\]
of the funtion $xy-a$ does not vanish at any point of $\U$, where $\{z_i\}$ are the local coordinates of $\F$. Since the smooth divisor $\Delta$ in $\F$ is defined by $a=0$, the Jacobian \[(\frac{\partial a}{\partial z_1},\ldots, \frac{\partial a}{\partial z_n})\]of $a$ on $\F$ does not vanish anywhere. We have proved the proposition.

\end{proof}

\begin{lemm}\label{td}We have that
\[td_1(\Omega^1_{\U/\F})=\frac{1}{2} c_1(\Omega^1_{\U/\F})\text{~and~} td_2(\Omega^1_{\U/\F})=\frac{1}{12}(c_1^2(\Omega^1_{\U/\F})+c_2(\Omega^1_{\U/\F}))\]
where $\Omega^1_{\U/\F}$ is the relative differential sheaf of $\pi$ and $td_i$ is the i-th Todd Class.

\end{lemm}
\begin{proof}
See \cite[Appendix A]{H} or \cite{F}.
\end{proof}

\begin{lemm}\label{td2}
Let $T_{\pi}$ be the relative tangent sheaf of $\pi$. The i-th Todd Class $td_i(T_{\pi})$ is equal to $(-1)^i td_i(\Omega_{\U/\F}^1)$ where $td(T_{\pi})$ is given by $\frac{td(T_{\U})}{\pi^* td(T_{\F})}$.
\end{lemm}

\begin{proof}
We denote $td_i(T_{\pi})$ by $td_i$. It is clear that
\[ (1+td_1+td_2+\dots)\pi^*td(T_{\F})=td(T_{\U}).\]
From the short exact sequence $0\rightarrow \pi^*\Omega^1_{\F}\rightarrow \Omega^1_{\U} \rightarrow \Omega^1_{\U/\F}\rightarrow 0$, we have that \[td(\Omega^1_{\U})=\pi^*td(\Omega^1_{\F})  td(\Omega^1_{\U/\F}).\]
On the other hand, we have that $td_i(T_{\F})=(-1)^i td_i(\Omega^1_{\F})$ and $td_i(T_{\U})=(-1)^i td_i(\Omega^1_{\U})$. We conclude that $td_i(T_{\pi})=(-1)^i td_i(\Omega^1_{\U/\F})$.
\end{proof}
\begin{lemm}\label{lemchom}  \label{lemchiz}
Let $\omega_{\U/\F}$ be the dualizing sheaf of the morphism $\pi$. We have the following identities

\[ch_0(\Omega^1_{\U/\F})=1,c_1(\Omega^1_{\U/\F})=c_1(\omega_{\U/\F}),\] \[ch_1(\Omega^1_{\U/\F})=c_1(\omega_{\U/\F})+c_1(I_{\Z})=c_1(\omega_{\U/\F}).\]
Let $I_{\Z}$ be the ideal sheaf of the singular locus $\Z$ in $\U$. We have that
\begin{center}
$ch_0(I_{\Z})=1$, $c_1(I_{\Z})=0$, $ch_1(I_{\Z})=0$,\\ $c_2(I_{\Z})=[\Z]$, $ch_2(I_{\Z})=-c_2(I_{\Z})=-[\Z]$,\\
$ch_2(\Omega^1_{\U/\F})=\frac{1}{2}(c_1(\omega_{\U/\F}))^2-[\Z]$ and $c_2(\Omega^1_{\U/\F})=[\Z]$,
\end{center}
where $[\Z]$ is the fundamental class of $\Z$.

\end{lemm}

\begin{proof}
Let $i$ be the natural inclusion $i: \Z\subseteq \U$. Using Lemma \ref{dif} and the fact that $\omega_{\U/\F}$ is a line bundle on $\U$, we have \[ch(\Omega^1_{\U/\F})=ch(\omega_{\U/\F}\otimes I_{\Z})=ch(\omega_{\U/\F})ch(I_{\Z}).\]
Expanding the right side, we show the first assertion. Since we know that $\U$ and $\Z$ are smooth from Proposition \ref{sm1} and Proposition \ref{sm2}, we can apply the formula in \cite[Example 15.3.5 Page 298]{F}. Therefore, we have that
$$\begin{array}{lll}
c(i_*(\mathcal{O_{\Z}})) & = & 1-i_*(c(N_{\Z/\U}^{\vee})^{-1}) \\
& = &  1-i_*(1-c_1(N_{\Z/\U}^{\vee}))+\dots \\
&=& 1-[\Z]-i_*(c_1(N_{\Z/\U}))+\dots,
\end{array}$$
where the notation $"\dots"$ means the cycles of higher codimension.
From the exact sequence
\[0\rightarrow I_{\Z} \rightarrow \mathcal{O}_{\U}\rightarrow i_* \mathcal{O}_{\Z}\rightarrow 0 ,\] we get the identity \[1=c(I_{\Z})c(i_*\mathcal{O}_{\Z})\] by the Whitney product formula. Expanding the right hand side and comparing the terms with the left hand side, we have proved the lemma .
\end{proof}

\section{An Integral Cycle Relation}
In this section, we apply the Grothendieck-Riemann-Roch theorem to show an identity relating divisors on $\F$, cf. Lemma \ref{twotimes}. Then we use the results in Section \ref{sectionofconics} to show an integral version of Lemma \ref{twotimes}, cf. Proposition \ref{asdiv}.

We denote by $\lambda$ the first Chern Class of line bundle $\varphi^*(\mathcal{O} _{\mathbb{P}^{n-2}}(1))$. Recall that we have the universal morphism\[f:\U\rightarrow X\] from the universal conic $\U$ to $X$, cf. Section 2. We hope it will cause no confusion if we sometimes also denote by $f:\U\rightarrow \PP^n$ the morphism $\U\rightarrow X\subseteq \PP^n$.

\begin{lemm}\label{crlemm1}

We have the following identities
\begin{enumerate}

\item $\pi^*\varphi^*(\mathcal{O} _{\mathbb{P}^{n-2}}(1))=\omega_{\U/\F}\otimes f^*\mathcal{O}_{\mathbb{P}^n}(1)$,
\item $c_1(\sigma_i^{*}T_{\U/\F})=-\lambda$,
\item $c_1(\omega_{\U/\F})=\pi^*\lambda - c_1(f^*\mathcal{O}_{\mathbb{P}^n}(1))$.

\end{enumerate}
\end{lemm}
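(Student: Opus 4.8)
\emph{Proof plan.} First observe that it suffices to prove the first identity. Granting $\pi^{*}\varphi^{*}\OO_{\PP^{n-2}}(1)=\om_{\U/\F}\otimes f^{*}\OO_{\PP^{n}}(1)$, the third identity follows by taking first Chern classes, since $c_{1}\big(\pi^{*}\varphi^{*}\OO_{\PP^{n-2}}(1)\big)=\pi^{*}\lambda$. For the second, pull the first identity back along $\sigma_{i}$: as $\pi\circ\sigma_{i}=\mathrm{id}_{\F}$ and $f\circ\sigma_{i}$ is constant (equal to $p$, resp.\ $q$), one gets $\varphi^{*}\OO_{\PP^{n-2}}(1)=\sigma_{i}^{*}\om_{\U/\F}$, hence $c_{1}(\sigma_{i}^{*}\om_{\U/\F})=\lambda$; and since $\sigma_{i}(\F)$ lies in the locus where $\pi$ is smooth, $\sigma_{i}^{*}T_{\U/\F}=(\sigma_{i}^{*}\om_{\U/\F})^{\vee}$, so $c_{1}(\sigma_{i}^{*}T_{\U/\F})=-\lambda$.

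For the first identity I would begin by showing that $L:=\om_{\U/\F}\otimes f^{*}\OO_{\PP^{n}}(1)$ restricts to the trivial line bundle on every fibre $C$ of $\pi$. By general position (cf.\ Proposition~\ref{sm0}), every $[C]\in\F$ is either a smooth conic or a union of two distinct lines, $f|_{C}$ is an isomorphism onto a plane conic, and the marked points are smooth non-nodal points of $C$; hence $C$ is a degree-$2$ hypersurface in the plane $P_{C}=\mathrm{Span}(f(C))$, and adjunction in $P_{C}$ yields a canonical isomorphism $\om_{C}\cong\OO_{P_{C}}(-1)\big|_{C}=\OO_{\PP^{n}}(-1)\big|_{C}$, valid at the node as well. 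Thus $L|_{C}\cong\OO_{C}$, and since $h^{0}(C,\OO_{C})=1$, $h^{1}(C,\OO_{C})=0$ on all fibres, cohomology and base change shows $N:=\pi_{*}L$ is a line bundle on $\F$ for which the natural map $\pi^{*}N\to L$ is an isomorphism. Restricting along $\sigma_{0}$ and using $f\circ\sigma_{0}\equiv p$ gives $N=\sigma_{0}^{*}L=\sigma_{0}^{*}\om_{\U/\F}$.

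It then remains to identify $\sigma_{0}^{*}\om_{\U/\F}$ with $\varphi^{*}\OO_{\PP^{n-2}}(1)$. Here I would resolve the projection $pr:\PP^{n}\dashrightarrow\PP^{n-2}$ from $l=\overline{pq}$ by blowing it up: let $\beta:\widetilde{\PP}=\mathrm{Bl}_{l}\PP^{n}\to\PP^{n}$ with exceptional divisor $E$, and $g:\widetilde{\PP}\to\PP^{n-2}$, so that $g^{*}\OO_{\PP^{n-2}}(1)=\beta^{*}\OO_{\PP^{n}}(1)\otimes\OO(-E)$. For general $p,q$ the conic $C$ meets $l$ in the reduced length-two scheme $\{p,q\}$ (Bézout, since $l\not\subseteq X$; this is the transversality from the proof of Lemma~\ref{inj}), so $f^{-1}(l)=\sigma_{0}(\F)\sqcup\sigma_{1}(\F)$ and $f$ lifts to $\widetilde{f}:\U\to\widetilde{\PP}$ with $\widetilde{f}^{*}E=\sigma_{0}(\F)+\sigma_{1}(\F)$. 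By Lemma~\ref{var} the morphisms $\varphi\circ\pi$ and $g\circ\widetilde{f}$ agree on the dense open $\U\setminus f^{-1}(l)$, hence agree (as $\U$ is reduced and $\PP^{n-2}$ separated), so in $\mathrm{Pic}(\U)$
\[
\pi^{*}\varphi^{*}\OO_{\PP^{n-2}}(1)=f^{*}\OO_{\PP^{n}}(1)\otimes\OO_{\U}\big(-\sigma_{0}(\F)-\sigma_{1}(\F)\big).
\]
Pulling back along $\sigma_{0}$ (using $f\circ\sigma_{0}\equiv p$ and $\sigma_{0}(\F)\cap\sigma_{1}(\F)=\emptyset$) gives $\varphi^{*}\OO_{\PP^{n-2}}(1)=\sigma_{0}^{*}\OO_{\U}(-\sigma_{0}(\F))$, the conormal bundle of the section, which equals $(\sigma_{0}^{*}T_{\U/\F})^{\vee}=\sigma_{0}^{*}\om_{\U/\F}$; with the previous paragraph this yields $\pi^{*}\varphi^{*}\OO_{\PP^{n-2}}(1)=\pi^{*}N=L$. (An alternative for this last step: $\sigma_{0}^{*}\om_{\U/\F}$ is the $\psi$-class at $p$; $df$ realises $\sigma_{0}^{*}T_{\U/\F}$ as the line sub-bundle $[T_{p}C]\subseteq T_{p}\PP^{n}\otimes\OO_{\F}$, and composing the resulting morphism $\F\to\PP(T_{p}\PP^{n})$ with the projection away from $[T_{p}l]$ recovers $\varphi$ under the canonical identification $\PP(T_{p}\PP^{n}/T_{p}l)\cong\PP^{n}/l$.)

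The cohomology-and-base-change step and the Chern-class bookkeeping are routine; the step I expect to be the real obstacle is the general-position input feeding the blow-up computation — namely that $f^{-1}(l)$ is exactly the \emph{reduced} divisor $\sigma_{0}(\F)+\sigma_{1}(\F)$, so that $\widetilde{f}$ picks up $E$ with multiplicity one — together with the accompanying facts that every $[C]\in\F$ is a reduced plane conic with $p,q$ smooth non-nodal points (no double lines, no contracted components, the node off $l$). These all follow from $p,q$ being general on a complete intersection with every $d_{i}\ge 2$ and from Proposition~\ref{sm0}, but they are precisely what makes the clean line-bundle identities hold and should be spelled out carefully.
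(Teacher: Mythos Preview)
Your proof is correct and follows the paper's argument closely: both show $\om_{\U/\F}\otimes f^*\OO_{\PP^n}(1)$ is fibrewise trivial via adjunction for a plane conic, invoke cohomology and base change to write it as $\pi^*N$ with $N=\sigma_0^*\om_{\U/\F}$, and then identify $N$ with $\varphi^*\OO_{\PP^{n-2}}(1)$; the deduction of the second and third identities from the first is also identical.

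The one genuine difference is the identification step. The paper argues directly that $\sigma_0^*T_{\U/\F}\cong\rho^*\OO_{\PP^{n-2}}(-1)$ via the tangent-direction morphism $\rho\colon [C]\mapsto \mathrm{Span}(p,q,df_*(T_{\sigma_0}C))/\mathrm{Span}(p,q)$ and then observes $\rho=\varphi$ --- this is exactly the ``$\psi$-class'' alternative you sketch in parentheses. Your primary route instead resolves the projection by blowing up $l$ and obtains the identity $\pi^*\varphi^*\OO_{\PP^{n-2}}(1)=f^*\OO_{\PP^n}(1)\otimes\OO_{\U}(-\sigma_0(\F)-\sigma_1(\F))$ in $\mathrm{Pic}(\U)$, then pulls back along $\sigma_0$. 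This is a perfectly valid alternative, and in fact the $\mathrm{Pic}(\U)$ identity you establish along the way is precisely the content of the paper's separate Lemma~\ref{crlemds}; so your blow-up computation proves Lemmas~\ref{crlemm1} and~\ref{crlemds} in one stroke, at the cost of needing the general-position input (reducedness of $f^{-1}(l)$, node off $l$) that you correctly flag and justify.
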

These identities are scattered in the paper \cite{DS}. We give a geometric proof.
\begin{proof}
Since the fiber $\U_x$ of $\pi$ over $x\in \F$ is a conic, the dualizing sheaf $\omega _{\U_x}$ is $\OO_{\U_x}(-2)=(f|_{\U_x})^*(\mathcal{O}_X(-1))$. For the family $\pi:\U\rightarrow \F$ of conics, the line bundle $\omega_{\U/\F} \otimes f^*\mathcal{O}_X(1)$ is fiberwise trivial. By the base change theorem, there is a line bundle $L$ on $\F$ such that
\begin{equation} \label{sh}
{\pi^*L=\omega_{\U/\F} \otimes f^*\mathcal{O}_X(1)}.
\end{equation}
Since the image of $f\circ \sigma_0$ is the point $p$, the pullback $(f\circ \sigma_0)^*\mathcal{O}_{X}(1)$ is trivial. Therefore, we have that \[L=\sigma_0^*(\pi^*L)=\sigma_0^*\omega_{\U/\F}\otimes (f\circ \sigma_0)^*\mathcal{O}_X(1)=\sigma_0^*\omega_{\U/\F}.\]
Since the morphism $\pi$ is smooth at the image of $\sigma_0$, we have that\[\sigma^*_0 \omega^{\vee}_{\U/\F}=\sigma_0^*\Omega_{\U/\F}^{\vee}=\sigma_0^*T_{\U/\F}.\]

For the conic $\U_x$, the 2-plane $\Span(\overline{pq},T_p \U_x)$ coincides with the 2-plane  $\Span(\U_x)$, where $T_p\U_x$ is the tangent line to $\U_x$ at the point $p$. On the other hand, we are able to construct a map $\rho:\F\rightarrow \mathbb{P}^{n-2}$ which associates to a point $$x=[\U_x,\U_x\subseteq X,p,q]\in \F$$ the point $[\Span(p,q,T_p\U_{x})/\Span(p,q) ]\in\PP^{n-2}=\PP^n/\Span(p,q)$, cf. (\ref{vmap}). So we have that\[\sigma_0^*T_{\U/\F}=\rho^*\mathcal{O}_{\mathbb{P}^{n-2}}(-1).\] It is easy to see that the morphism $\rho$ is exactly the morphism $\varphi$. We show $(2)$. Moreover, we have that
\[L=\sigma_0^*\omega_{\U/\F}=\rho^*\mathcal{O}_{\mathbb{P}^{n-2}}(1)=\varphi^*\mathcal{O}_{\mathbb{P}^{n-2}}(1).\]
Combining with equality \eqref{sh}, we show $(1)$ and $(3)$.

\end{proof}

We denote by $\sigma_i$ the image $\sigma_i(\F)$ of the section $\sigma_i$ as well.
\begin{lemm} \cite[Lemma 6.4]{DS}\label{detl} \label{crlemtc}  \label{crlemds}
We have that
\begin{enumerate}

\item $[\sigma_i]^2=-\sigma_{i*}(\lambda)$.
\item $\pi_*f^*(c_1(\mathcal{O}_{\mathbb{P}^n}(1))^2)=2\lambda$ in $\mathrm{CH}^1(\F)_{\mathbb{Q}}$.
\item $c_1(f^*\OO_{X}(1))=[\sigma_0]+[\sigma_1]+\pi^*\lambda$ (in page 33 of \cite{DS}).
\end{enumerate}
\end{lemm}
\begin{proof}
We sketch a proof here for the sake of completeness. From the proof of Lemma \ref{crlemm1}, we have that \[\sigma^*_i(T_{\U/\F})=\rho^*\OO_{\PP^{n-2}}(-1)=-\varphi^*(\OO_{\PP^{n-2}}(1))=-\lambda.\]Since $\sigma_i$ is a smooth divisor of $\U$, we have that
\[ [\sigma_i]^2= \sigma_{i*}(c_1(N_{\sigma_i/\U}))=\sigma_{i*}(\sigma_i^*(T_{\U/\F}))=-\sigma_{i*}(\lambda).\]
One can show (3) if one knows the precise definition of $\varphi$. We refer to \cite[Lemma 6.4 page 33]{DS}. Here, we provide an alternative way to show (3) from Lemma \ref{var}. Let $H$ be a hyperplane in $\mathbb{P}^{n-2}$. We consider the rational pullback $(\pr|_X)^*(H)$ of $H$ via the map $\pr|_X$ in Lemma \ref{var}. The pullback $(\pr|_X)^*(H)$ is a hyperplane contained in $\PP^n$ and passing through $p$ and $q$.
Therefore, it follows that \[f^*c_1(\OO_X(1))=f^*((\pr|_X)^*H)=\pi^*\varphi^*\OO_{\PP^{n-2}(1)}=\pi^*\lambda\]holds on $\U-\sigma_0-\sigma_1$ from the commutativity of the diagram in Lemma \ref{var}. We conclude that \[f^*c_1(\OO_X(1))=\pi^*\lambda+A[\sigma_0]+B[\sigma_1]\] for some positive integers $A$ and $B$ since $(\pr|_X)^*(H)$ passes through $p$ and $q$. Let $C$ be a fiber of $\pi$. The intersection number of $C$ and $f^*c_1(\OO_X(1))$ is two since $C$ is a conic in $X$. Meanwhile, we have that\[C\cdot \pi^*(\lambda)=0 \text{~and~} C\cdot [\sigma_i]=1.\] We conclude that $A=B=1$. We have proved (3).

The second assertion follows from (1) and (3).
\end{proof}

\begin{lemm}\label{cycle}
With the notation as before, we have that
\begin{equation} \label{equaS}
[c_1(\omega_{\U/\F})]^2=-\pi^*(\lambda^2) +f^*(c_1(\mathcal{O}_{\PP^n}(1))^2)-2\pi^*(\lambda)([\sigma_0]+[\sigma_1]).
\end{equation} In particular, we have that $\pi_{*}([c_1(\omega_{\U/\F})]^2)=-2\lambda$ in $\mathrm{CH}^1(\F)_{\mathbb{Q}}$.
\end{lemm}

\begin{proof}
By Lemma \ref{crlemm1} (3), we have that\[[c_1(\omega_{\U/\F})]^2=\pi^*(\lambda^2)+f^*(c_1(\mathcal{O}_{\PP^n}(1))^2)-2\pi^*(\lambda)\cdot f^*(c_1(\mathcal{O}_{\PP^n}(1)).\] By Lemma \ref{crlemds}, we know that \[c_1(f^*\mathcal{O}_{\mathbb{P}^n}(1))=[\sigma_0]+[\sigma_1]+\pi^*\lambda.\]
Hence, we show the first assertion by combining these two identities. Applying $\pi_*(-)$ to the identity (\ref{equaS}), we show the second assertion by the projection formula and the fact $\pi_*([\sigma_i])=[\F]$.
\end{proof}

\begin{lemm}\label{twotimes}
We have that $\Delta=2\lambda$ in $\Pic^1(\F)_\mathbb{Q}$.
\end{lemm}
\begin{proof}
By Proposition \ref{sm1}, we can apply the Grothendieck-Riemann-Roch theorem \cite[Chapter 15]{F} to the morphism $\pi:\U\rightarrow \F$. Denote by $\omega$ the dualizing sheaf $\omega_{\U/\F}$. By Lemma \ref{td}, Lemma \ref{td2} and Lemma \ref{lemchom}, we conclude that $(td(T_{\pi}))_{\leq 2}$ (up to degree 2) is equal to
$$\begin{array}{ll}
  1-td_1(\Omega^1_{\U/\F})+td_2(\Omega^1_{\U/\F}) & =1-\frac{1}{2}c_1(\Omega^1_{\U/\F})+\frac{1}{12}(c_1(\Omega^1_{\U/\F})^2+c_2(\Omega^1_{\U/\F})) \\
  \\
   & =1-\frac{1}{2}c_1(\omega)+\frac{1}{12}(c_1(\omega)^2+[\Z]).
\end{array}$$

By Lemma \ref{lemchiz}, we have that $ch(I_{\Z})_{\leq 2}=1-[\Z]$ (up to degree 2).
Therefore, the term of degree 2 in $td(T_{\pi})\cdot ch(I_{\Z})$ is
\begin{equation} \label{degree2}
(td(T_{\pi})\cdot ch(I_{\Z}))_2=\frac{1}{12}[\Z]+\frac{1}{12}c_1^2(\omega)-[\Z].
\end{equation}

Let $\pi_!(G)$ be $\sum \limits_{i=0}^{dim(\U)}(-)^iR^i\pi_*(G)$ (in the K-group $K_0(\F)$ of $\F$) for a coherent sheaf $G$ on $\U$. We claim that \begin{enumerate}
                                 \item $\pi_!\mathcal{O}_{\U}=\mathcal{O}_{\F}$,
                                 \item and $\pi_!(i_*\mathcal{O}_{\Z})=j_*(\mathcal{O}_{\Delta})$ where the maps $i:\Z\rightarrow \U$ and $j:\Delta \rightarrow \F$ are the closed immersions.
                               \end{enumerate}
In fact, suppose that $C$ is a reducible  conic with a node point $b$, we have a short exact sequence
\[0\rightarrow \mathcal{O}_{P_1}(-b)\rightarrow \mathcal{O}_{C} \rightarrow \mathcal{O}_{P_2} \rightarrow 0,\]
where $P_1$ and $P_2$ are the distinct components of $C$. From the short exact sequence, we have that $H^1(C,\mathcal{O}_C)=0$. On the other hand, it is obvious that $H^1(C,\mathcal{O}_C)=0$ if $C$ is a smooth conic. By the base change theorem, we have that $R^i \pi_*\mathcal{O}_{\U}=0$ for all $i\geq 1$. We conclude that $\pi_!\mathcal{O}_{\U}=\mathcal{O}_{\F}$.

To prove $\pi_!\mathcal{O}_{\Z}=\mathcal{O}_{\Delta}$, it suffices to show that $R^k \pi_*\mathcal{O}_{\Z}=0$ for all $k\geq 1$. In fact, by Proposition $\ref{sm2}$, the map $\pi\circ i:\Z \rightarrow \Delta$ is an isomorphism. It is clear that \[R^{s}j_{*}(\OO_{\Delta})=0 \text{~(resp.~}R^{l}i_*(\OO_{\Z})=0\text{)}\] if $s\geq 1$ (resp. $l\geq 1$). In particular, the Leray spectral sequence  \[E^{k,l}_2=R^k \pi_*(R^l i_*(\mathcal{O}_{\Z}))\Rightarrow R^{k+l} (j\circ \delta)_*(\mathcal{O}_{\Z})=R^{k+l} j_*(\mathcal{O}_{\Delta})\]
degenerates at the $E_2$ page. We conclude that $R^k \pi_*(i_*\mathcal{O}_{\Z})=0$ for $k\geq 1$.
Applying the Grothendieck-Riemann-Roch theorem, we have that
\begin{equation} \label{eqgroth}
\pi_*(ch(I_{\Z})\cdot td(T_{\pi}))=ch(\pi_!(I_{\Z}))=ch(\pi_!(\mathcal{O}_{\U})-\pi_!(i_*(\mathcal{O}_{\Z}))).
\end{equation}
The right hand side RHS of (\ref{eqgroth}) is equal to
\[ch(\mathcal{O}_{\F}-j_*\mathcal{O}_{\Delta})=ch(\mathcal{O}_{\F}(-\Delta)).\]
In particular, the term of degree $1$ in RHS is equal to $-[\Delta]$. By Proposition \ref{sm2} and the equality \eqref{degree2}, the term $\pi_*((td(T_{\pi})\cdot ch(I_{\Z}))_2)$ of degree $1$ in the left hand side (LHS) of (\ref{eqgroth}) is equal to \[\frac{1}{12}[\Delta]+\frac{1}{12}\pi_*(c_1^2(\omega))-[\Delta] .\]Comparing the term of dgree 1 in RHS with the term in LHS, we conclude that \[0=\frac{1}{12}[\Delta]+\frac{1}{12}\pi_*(c_1^2(\omega))\]in $\mathrm{CH}^1(\F)_{\mathbb{Q}}$. Therefore, the divisor $\Delta$ is equal to $2\lambda$ in $\Pic^1(\F)_{\mathbb{Q}}$ by Lemma \ref{cycle}.

\end{proof}
In the following, we use the notation in Section \ref{sectionofconics} freely. For the smooth scheme $M^o$ (cf. Lemma \ref{LemmATB}), we have a morphism
\begin{equation} \label{ph}
\psi: M^o\rightarrow \mathbb{P}^{n-2}=\PP^n/\Span(p,q)
\end{equation}
that associates to a point $[C]\in M^o$ the point \[\psi([C])=[\Span(C)] \in \PP^{n-2}=\PP^n/\Span(p,q),\] cf. (\ref{vmap}).

\begin{propo} \label{asdiv}
The boundary divisor $\Delta$ of $\F$ is linearly equivalent to $2\lambda$.
\end{propo}

\begin{proof}
From the moduli interpretation, we have a morphism $H:\F\rightarrow M^o$ with $\varphi=\psi \circ H.$
\[\xymatrix{\F \ar[r]^{H} \ar[rd]^{\varphi} & M^o\ar[d]^{\psi} \\
&\PP^{n-2}}\]With the notation in Lemma \ref{ktimes}, it is clear that the divisor $\Delta$ is the pullback of $\Delta_1$ (Lemma \ref{ktimes}) via $H$. Let $k$ be the rational number $\frac{m}{h}$. By Lemma \ref{ktimes} and the diagram above, we have that \[\Delta=H^*(\Delta_1)=k\cdot H^*(\psi^*(\mathcal{O}_{\PP^{n-2}}(1)))=k \varphi^*(\mathcal{O}_{\PP^{n-2}}(1))=k \lambda\]in $\Pic(\F)_{\mathbb{Q}}$.
By Lemma \ref{twotimes}, we have that $\Delta=2\lambda$ in $\Pic(\F)_{\mathbb{Q}}$. Therefore, we conclude that $k=2$.
In particular, we have that $\Delta_1=2 \psi^*(\mathcal{O}_{\PP^{n-2}}(1))$ in $\Pic(M^o)$. Therefore, we conlcude that \[\Delta=H^*(\Delta_1)=2 H^*(\psi^*(\mathcal{O}_{\PP^{n-2}}(1)))=2 \varphi^*(\mathcal{O}_{\PP^{n-2}}(1))=2 \lambda\] in $\Pic(\F)$. We have proved the proposition.
\end{proof}
\begin{remark}
Proposition \ref{asdiv} could be proved alternatively by a test curve computation (see \cite[Lemma 4.12]{ZC}).
\end{remark}

\section{The Main Theorem}
We start with some lemmas to show a criterion (Proposition \ref{mainprop1}) to characterize when a projective variety is a complete intersection in a projective space.
\begin{lemm} \label{alg}
Let $R$ be a Cohen-Macaulay ring. If $R$ has only one minimal prime ideal p and the localization $R_p$ is reduced, then R is reduced.
\end{lemm}

\begin{proof}
We consider the localization map $f:R\rightarrow R_p$. We claim that f is injective.

In fact, the ring $R$ has no embedded associated primes by the unmixedness theorem \cite[Theorem 17.6]{Ma}. In particular, from the primary decomposition of the zero ideal, we know that the zero ideal is a p-primary ideal. Since the kernel $\ke(f)$ of $f$  is the smallest $p$-primary ideal by \cite[Exercise 4.11]{A}, the kernel $\ke(f)$ is a subset of the zero ideal, i.e., the map $f$ is injective. We have proved the claim.

Therefore, the ring $R$ is reduced since $R_p$ is reduced.
\end{proof}

\begin{lemm} \label{char}
 Suppose that $X$ and $Y$ are smooth projective varieties in $\PP^n$. Assume that
 \begin{enumerate}
   \item $X$ is a divisor of $Y$,
   \item and $X$ is a complete intersection in $\PP^n$ defined by homogeneous polynomials $(f_1,f_2,\dots,f_m)$,
   \item and the line bundle $\mathcal{O}_Y(X)$ is equal to $\mathcal{O}_Y(\degr(f_m))$ in $\Pic(Y)_{\mathbb{Q}}$,
   \item and $f_1|_Y=f_2|_Y=\dots=f_{m-1}|_Y=0$.
 \end{enumerate}
 Then the variety Y is a complete intersection contained in $\PP^n$ and defined by homogeneous polynomials $(f_1,f_2,\dots,f_{m-1})$.
\end{lemm}

\begin{proof}
Let $Z$ be a subscheme of $\PP^n$ defined by the polynomials $(f_1,f_2,\dots,f_{m-1})$. Denote by $V(f_1,f_2,\dots,f_{m-1})$ the scheme $Z$. By the assumption (2), the scheme $Z$ is equidimensional and of codimension $m-1$. Moreover, the reduced scheme $Z_{red}$ is connected. Since \[X=V(f_1,f_2,\dots,f_m)\] is a divisor of Y and \[f_1|_Y=f_2|_Y=\dots=f_{m-1}|_Y=0,\] the variety Y is one of the irreducible components of the reduced scheme $Z_{red}$ associated to $Z$.

We claim that $Z_{red}$ has only one irreducible component. In particular, we have $Z_{red}=Y$.

In fact, suppose that $Z_{red}=Y \cup W$ where $W$ is the union of other components rather than $Y$. The intersection $Y\cap W$ is not empty since $Z_{red}$ is connected. Moreover, the scheme $Z$ is not smooth at any point in $Y\cap W$. So the hypersurface $V(f_m)$ defined by $f_m=0$ does not meet $Y\cap W$, otherwise, the variety $X=V(f_m) \cap Z$ is not smooth at any point of $V(f_m)\cap Y\cap W$ by local calculations. We conclude that
$$X=V(f_m)\cap Z=(V(f_m)\cap Y) \cup (V(f_m)\cap W)$$ is a non-trivial decomposition, i.e., the variety $X$ is reducible. It is a contradiction.

We claim that the subscheme $Z$ of $\PP^n$ is $Y$ $(\subseteq \PP^n)$.

In fact, we have already proved that $Y=(Z)_{red}$. In particular, the reduced scheme $Z_{red}$ is irreducible. We only need to show that $Z$ is reduced. Since $Z$ is a complete intersection in a projective space, it is Cohen-Macaulay. By Lemma \ref{alg}, if $Z$ is reduced at the generic point, then $Z$ is reduced. To prove that $Z$ is reduced at the generic point, it suffices to prove that $[Z]=[Z_{red}]$ where $[Z]$ is the fundamental class of $Z$ in $\mathrm{CH}^*(\PP^n)$, cf.  \cite[Chapter 1]{F}. We have that $[Z]=k [Z_{red}]$ for some $k\in \mathbb{N}$ and $$[Z]\cdot \mathcal{O}_Y(\degr(f_m))=[X]=[Y]\cdot\mathcal{O}_Y(X).$$Therefore, we get $k [Z_{red}]\cdot\mathcal{O}_Y(\degr(f_m))=[Z_{red}]\cdot\mathcal{O}_Y(X)$.
By the assumption (3) of the lemma, we imply that k=1. We have proved the lemma.
\end{proof}

\begin{prop} \label{mainprop1}
Suppose that $\Delta$ and $\F$ are smooth projective varieties in $ \mathbb{P}^N$. Assume that
\begin{enumerate}
  \item the variety $\Delta$ is a divisor of $\F$ and the dimension of $\Delta$ is at least one;
  \item the divisor $\Delta$ is a complete intersection in $\mathbb{P}^N$ of type $(d_1,\ldots,d_c)$ where $d_i\geq1$;
   \item the divisor $\Delta$ is defined by a global section $\overline{Q}\in H^0(\F,\OO_{\F}(d_1))$ such that $\overline{Q}=Q|_{\F}$ for a global section $Q\in H^0(\PP^N,\OO_{\PP^N}(d_1))$.
\end{enumerate}
Then the fiber $\F$ is a complete intersection in $\mathbb{P}^N$ of type $(d_2,\ldots, d_c)$.
\end{prop}

\begin{proof}
By the assumption (2), we can assume that the divisor $\Delta$ is defined by the polynomials $(F_1,\ldots,F_c)$ where $F_i(X_0,\ldots,X_N)$ is a homogeneous polynomial of degree $d_i$. By the assumption (3), we have a short exact sequence
\[
\xymatrix {
  0\ar[r]& \mathcal{O}_{\F}(-d_1) \ar[r]^{\_\cdot \overline{Q}}& \mathcal{O}_{\F}\ar[r]&\mathcal{O}_{\Delta} \ar[r]&0}.
\]

Since $\overline{Q}$ defines $\Delta$, we have $Q\in H^0(\mathbb{P}^N, I_{\Delta}(d_1))$ where $I_{\Delta}$ is the ideal sheaf of $\Delta$ in $\PP^N$.

I claim that $F_i|_{\F}=0$ if the degree $d_i$ of $F_i$ is less than $d_1$. In fact, we have the following diagram
\begin{equation} \label{eq:diag}
\xymatrix{0 \ar[r]& \mathcal{O}_{\F}(-d_1) \ar@/^/@{.>}[rdrdr]^{\simeq} \ar[r]^{\underline{\ } \cdot \overline{Q}}& \mathcal{O}_{\F} \ar[r]& \mathcal{O}_{\Delta} \ar[r]& 0\\
&0 \ar[r] & \mathcal{O}_{\mathbb{P}^n} \ar@{->>}[u] \ar@{=}[r] & \mathcal{O}_{\mathbb{P}^n}\ar@{->>}[u] \ar[r] &0\\
&0\ar[r] & I_{\F} \ar@{^{(}->}[u] \ar[r]^i & I_{\Delta}  \ar@{^{(}->}[u] \ar[r]^{\delta}& coker(i)\ar[r]& 0. \\
}
\end{equation}

Since the dotted connecting map in the diagram is an isomorphism, we have a short exact sequence
\[0\rightarrow I_{\F}\rightarrow I_{\Delta} \rightarrow \mathcal{O}_{\F}(-d_1)\rightarrow 0.\]
For an integer $m$, the short exact sequence induces the exact sequence of global sections
\begin{equation}    \label{eq:exact}
\xymatrix{0\ar[r] &\Gamma(\mathbb{P}^{N},I_{\F}(m)) \ar[r] &\Gamma(\mathbb{P}^{N},I_{\Delta}(m))\ar[r]^{\delta'} &\Gamma(\F,\mathcal{O}_{\F}(m-d_1))}.
\end{equation}
We take $m=d_i=\degr (F_i)$ in \eqref{eq:exact}. Since $\Gamma(\F,\mathcal{O}_{\F}(d_i-d_1))=0$ ( $d_i<d_1$), we have $\Gamma(\mathbb{P}^{N},I_{\F}(d_i))=\Gamma(\mathbb{P}^{N},I_{\Delta}(d_i))$. We have proved the claim.

Furthermore, the degree of the homogeneous polynomial $Q$ is $d_1$. Therefore, we could write $Q$ as
\begin{equation} \label{eqfi}
Q=\sum\limits_{k=1}^l H_k F_{i_k} \text{~with~} \degr F_{i_k} \leq d_1.
\end{equation}
By the claim above, we conclude that \[\overline{Q}=\overline {Q-\sum\limits_{\deg(F_{i_k})<d_1} H_k F_{i_k}} \in H^0(\F,\OO_{\F}(d_1)).\]
So we can replace $Q$ by \[Q-\sum\limits_{\deg(F_{i_k})<d_1} H_k F_{i_k}\in H^0(\PP^N,I_{\Delta}(d_1)).\]
In other words, we can assume that $F_{i_k}$ in (\ref{eqfi}) is of degree $\degr (F_{i_k})=d_1$, i.e., the polynomials $H_k$ are constants. Since $\overline{Q}\neq0$, we can assume that \[Q=a_1 F_1+\ldots\] where $a_1$ is a nonzero constant. Therefore, the variety $\Delta=V(F_1,F_2,\ldots,F_c)$ can be defined by the polynomials \[(Q, F_2,\ldots, F_c).\]
Suppose that $u$ is the image of $F\in \Gamma(\mathbb{P}^N,I_{\Delta}(m))$ under the map (see (\ref{eq:exact}))
 \[\delta':\Gamma(\mathbb{P}^{N},I_{\Delta}(m))\rightarrow \Gamma(\F,\mathcal{O}_{\F}(m-d_1))\]
(i.e., $u=\delta'(F)\in \Gamma(\F,\mathcal{O}_{\F}(m-d_1))$). If $m\geq d_1$, then we have \[F|_{\F}=u\cdot (Q|_{\F})\]by the diagram chasing of the diagram (\ref{eq:diag}). Moreover, by Lemma \ref{surjective} below, there is $U\in \Gamma(\mathbb{P}^{N},\OO_{\PP^N}(m-d_1))$ such that $U|_{\F}=u$. Therefore, we have that
 \begin{center}
 $F-U \cdot Q \in \Gamma(\mathbb{P}^N, I_{\F}(m))$ , i.e., $(F-U \cdot Q)|_{\F}=0$.
 \end{center}
Suppose that we take $F$ to be $F_i$ whose degree $\geq d_1$. We have homogeneous polynomial $U_i$ such that
\begin{equation} \label{eq:big}
(F_i-U_i\cdot Q)|_{\F}=0.
\end{equation}
Suppose that the degrees of $F_2,\ldots, F_l$ are at least  $d_1$ and the degrees of $F_ {l+1},\ldots, F_c $ are less than $d_1$. We conclude that the divisor $\Delta$ can be defined by homogeneous polynomials  \[\left(Q, F_2-U_2 Q, \ldots, F_l-U_l Q, F_{l+1},\ldots, F_{c}\right).\] Therefore, applying Lemma \ref{char} to $\F$, we show that $\F$ is defined by homogeneous polynomials\[(F_2-U_2 Q, \ldots, F_l-U_l Q, F_{l+1},\ldots, F_{c}).\]
In particular, the fiber $\F$ is a complete intersection of type $(d_2,\ldots,d_c)$ in $\mathbb{P}^N$.

\end{proof}

\begin{lemm} \label{surjective}
With the same assumption as in Proposition \ref{mainprop1}, the restriction map $$\Gamma(\mathbb{P}^{N},\mathcal{O}_{\mathbb{P}^{N}}(m)) \rightarrow \Gamma(\F,\mathcal{O}_{\F}(m))$$ is surjective for every $m\in \mathbb{N}$
\end{lemm}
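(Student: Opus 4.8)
The plan is to prove the surjectivity of the restriction map $\Gamma(\mathbb{P}^N,\mathcal{O}_{\mathbb{P}^N}(m)) \to \Gamma(\F,\mathcal{O}_\F(m))$ by induction on $m$, using the divisor $\Delta \subseteq \F$ as a "peeling" device together with the complete intersection structure of $\Delta$ in $\mathbb{P}^N$, which already gives us the corresponding surjectivity onto $\Delta$. The base case $m = 0$ is trivial since both sides are $\mathbb{C}$ (both $\F$ and $\mathbb{P}^N$ being irreducible projective varieties). For the inductive step I would compare the restriction to $\F$ with the restriction to $\Delta$ and exploit that $\Delta$ is cut out on $\F$ by a single equation of degree $d_1$.

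**First I would** set up the two exact sequences that link $\mathbb{P}^N$, $\F$, and $\Delta$. On $\F$, the hypothesis that $\mathcal{O}_\F(\Delta) = \mathcal{O}_\F(d_1)$ gives
\[
0 \to \mathcal{O}_\F(m - d_1) \xrightarrow{\ \cdot\, \overline{Q}\ } \mathcal{O}_\F(m) \to \mathcal{O}_\Delta(m) \to 0,
\]
where $\overline{Q} = Q|_\F$ and $Q \in \Gamma(\mathbb{P}^N, I_\Delta(d_1))$ was chosen as in the proof of Proposition \ref{mainprop1}. On $\mathbb{P}^N$, since $\Delta$ is a complete intersection of type $(d_1,\ldots,d_c)$, the coordinate ring of $\Delta$ is the quotient of $\mathbb{C}[x_0,\ldots,x_N]$ by $(F_1,\ldots,F_c)$, and in particular the restriction $\Gamma(\mathbb{P}^N,\mathcal{O}_{\mathbb{P}^N}(m)) \to \Gamma(\Delta,\mathcal{O}_\Delta(m))$ is surjective for all $m$ (a projective complete intersection is projectively normal, or more elementarily: $H^1(\mathbb{P}^N, I_\Delta(m)) = 0$ by the Koszul resolution of $I_\Delta$). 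Call this surjection $\rho_\Delta^m$, and write $\rho_\F^m$ for the map we want to show is surjective.

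**Next, the inductive step.** Assume $\rho_\F^{m-d_1}$ is surjective (if $m < d_1$, then $\Gamma(\F,\mathcal{O}_\F(m-d_1)) = 0$ since $\mathcal{O}_\F(1)$ is ample, so there is nothing to assume). Take $s \in \Gamma(\F,\mathcal{O}_\F(m))$. Restrict to $\Delta$: since $\rho_\Delta^m$ is surjective, there is $P \in \Gamma(\mathbb{P}^N,\mathcal{O}_{\mathbb{P}^N}(m))$ with $P|_\Delta = s|_\Delta$. Then $s - P|_\F \in \Gamma(\F,\mathcal{O}_\F(m))$ vanishes on $\Delta$, so by exactness of the first sequence it equals $\overline{Q} \cdot t$ for some $t \in \Gamma(\F,\mathcal{O}_\F(m-d_1))$. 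By the induction hypothesis, $t = T|_\F$ for some $T \in \Gamma(\mathbb{P}^N,\mathcal{O}_{\mathbb{P}^N}(m-d_1))$. Then $P + QT \in \Gamma(\mathbb{P}^N,\mathcal{O}_{\mathbb{P}^N}(m))$ restricts on $\F$ to $P|_\F + \overline{Q}\,t = P|_\F + (s - P|_\F) = s$, which proves surjectivity of $\rho_\F^m$ and closes the induction.

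**The main obstacle**, and the one deserving the most care, is the input fact that restriction to the complete intersection $\Delta \subseteq \mathbb{P}^N$ is surjective in every degree, i.e.\ that $H^1(\mathbb{P}^N, I_\Delta(m)) = 0$. This is standard — it follows from chasing the Koszul complex $0 \to \mathcal{O}_{\mathbb{P}^N}(-\sum d_i) \to \cdots \to \bigoplus_i \mathcal{O}_{\mathbb{P}^N}(-d_i) \to I_\Delta \to 0$ and using $H^i(\mathbb{P}^N,\mathcal{O}_{\mathbb{P}^N}(k)) = 0$ for $0 < i < N$ — but it is exactly the place where the complete-intersection hypothesis on $\Delta$ (condition (3)) gets used, and it must be invoked cleanly. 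One should also make sure the ambient projective space is the same $\mathbb{P}^N$ throughout (the statement places both $\Delta$ and $\F$ in $\mathbb{P}^N$, so $\mathcal{O}_\F(1)$ and $\mathcal{O}_\Delta(1)$ are the restrictions of $\mathcal{O}_{\mathbb{P}^N}(1)$ and the diagram in \eqref{eq:diag} is consistent), and that $\overline{Q} \neq 0$ so that multiplication by $\overline{Q}$ is genuinely injective — both of which are already part of the setup of Proposition \ref{mainprop1}.
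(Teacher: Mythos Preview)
Your proof is correct and follows essentially the same approach as the paper: both argue by induction on $m$, using (i) the surjectivity of restriction to the complete intersection $\Delta$ and (ii) the short exact sequence $0 \to \mathcal{O}_\F(m-d_1) \to \mathcal{O}_\F(m) \to \mathcal{O}_\Delta(m) \to 0$ on $\F$. The paper organizes the same chase as a commutative ladder of global sections and invokes the snake lemma, whereas you write out the diagram chase explicitly (lift to $\Delta$, subtract, factor through $\overline{Q}$, lift by induction); you are also a bit more careful than the paper about the base cases $0 \le m < d_1$ via the vanishing $\Gamma(\F,\mathcal{O}_\F(m-d_1))=0$.
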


\begin{proof}
We prove the lemma by induction on $m$.
In fact, we have the following diagram of the short exact sequences
\[\xymatrix{0\ar[r]& \mathcal{O}_{\mathbb{P}^{N}}(-d_1) \ar[d] \ar[r] &\mathcal{O}_{\mathbb{P}^{N}}\ar[d] \ar[r] &i_*\OO_{\Delta}\ar[d] \ar[r]&0&\\
0\ar[r]& \mathcal{O}_{\F}(-d_1)\ar[r] &\mathcal{O}_{\F}\ar[r] &j_*\mathcal{O}_{\Delta}\ar[r]&0
}
\]
where $j$ and $i$ are the natural inclusions.
Therefore, we have that
\[\xymatrix{0\ar[r]& \Gamma(\PP^N,\mathcal{O}_{\mathbb{P}^{N}}(m-d_1)) \ar[d]^h \ar[r] &\Gamma(\PP^N, \mathcal{O}_{\mathbb{P}^{N}}(m))\ar[d]^s \ar[r]^g &\Gamma(\Delta, \mathcal{O}_{\Delta}(m)) \ar@{=}[d]\\
0\ar[r]& \Gamma(\F,\mathcal{O}_{\F}(m-d_1))\ar[r] & \Gamma(\F,\mathcal{O}_{\F}(m))\ar[r] &\Gamma(\Delta, \mathcal{O}_{\Delta}(m))
}
\]
where the rows are exact. Since $d_1\geq 1$ , we know that the map $h$ is surjective by the induction. Since the boundary divisor $\Delta$ is a complete intersection in $\PP^N$ and of dimension at least one, we conclude that the map
$$g:\Gamma(\mathbb{P}^{N},\mathcal{O}_{\mathbb{P}^{N}}(m))\rightarrow \Gamma(\Delta ,\mathcal{O}_{\Delta}(m))$$
is surjective. Therefore, the map $s$ is surjective by the snake lemma. We have proved the lemma.
\end{proof}

In the end, we show Theorem \ref{mainthmb} as follows.
\begin{proof}
The first assertion follows from Proposition \ref{sm0}. By Proposition \ref{propemb}, we know that $\F$ is a smooth subvariety of $\PP^n$ via the embedding $\varphi$.

By Lemma \ref{delci}, we can suppose that the complete intersection $\varphi(\Delta)$ in $\PP^{n-2}$ is defined by the homogeneous polynomials
\[(Q,F_1,\dots,F_l,H_1,\dots,H_k)\]
where $\degr(Q)=2$, $\degr(H_i)=1$ and $\degr(F_i)\geq2$.

Furthermore, the inequality $$n\geq2\sum\limits_{i=1}^{c}d_i-c+1$$ ensures that the dimension of the divisor $\Delta$ of $\F$ is at least one. We claim that the boundary divisor $\Delta$ is the intersection of $\F$ and a quadric hypersurface in $\PP^{n-2}$. The theorem follows from Proposition \ref{mainprop1} and Lemma \ref{delci} . We show the claim in the following.

By Proposition \ref{asdiv}, we know that the ideal sheaf of $\Delta$ is $\mathcal{O}_{\F}(-2)$. To prove the claim, it suffices to find one quadratic polynomial on $\PP^{n-2}$ which vanishes on the divisor $\Delta$ but not on the entire fiber $\F$.



For simplicity, we assume that $X$ is a smooth hypersurface defined by $G(X_0,\ldots,X_n)$. In an affine coordinate system ($\mathbb{A}^n$) whose origin is $q$, we can write $G$ as \[G=f_1+f_2+\ldots+f_d\]
where $f_i(X_1,\ldots,X_n)$ is a homogeneous polynomial of degree $i$ and the tangent space $T_{X,q}$ is defined by $f_1$.

It is well known that the union $L_q$ of lines contained in $X$ and passing through $q$ is defined by the homogeneous polynomials $$(f_2,f_3,\ldots,f_d)$$ in the projetive tangent hyperplanes $\PPT_{X,q}$ (the closure of the tangent space $T_{X,q}$ in $\PP^n$ ), cf. \cite[Lemma 2.1]{Cubic} and Lemma \ref{cilines}.  Moreover, we have $\Delta=L_p\cap L_q$ where $L_p$ is the union of lines contained in $X$ and passing through $p$, cf. the proof of Proposition \ref{sm0}. Therefore, the quadratic polynomial $f_2$ is vanishing on the boundary $\Delta(\subseteq \PPT_{X,q})$.

Recall that the projective space $\PP^{n-2}=\PP^n/\Span(p,q)$ can be identified with the intersection $\PPT_{X,p}\cap\PPT_{X,q}$, cf. Lemma \ref{var}. As above, via the embedding $\varphi$, we can consider the fiber $\F$ to be a subvariety of $\PPT_{X,q}$. In the following, we show that $f_2$ does not vanish on the entire fiber $\F$.

In fact, the polynomial $f_2$ defines an affine cone in the tangent space $T_{X,q}$ to $X$ at $q$. Since the tangent space $T_{X,q}$ is an affine open subscheme of $\PPT_{X,q}$, the closure of this affine cone gives rise to a projective cone $Q_{f_2}$ in $\PPT_{X,q}$. The projective cone $Q_{f_2}$ in the projective space $\PPT_{X,q}$ is defined by $f_2$ as well.

On the other hand, since the general fiber $\F$ has positive dimension, the evalution map \[ev:\M_{0,2}(X,2)\rightarrow X\times X\] is surjective. Therefore, any two points of $X$ can be connected by a conic. Let $C_q$ be the space of conics contained in $X$ and passing through $q$. The projective tagnent lines to the conics parametrized by $C_q$ at the point $q$ sweep out $\PPT_{X,q}$. Suppose that $C$ is a conic parametrized by a general point of $C_q$. We conclude that the projective tangent line $\PPT_{C,q}$ to $C$ at $q$ meets the cone $Q_{f_2} (\subseteq \PPT_{X,q})$ only at the point $q$. In other words, we have that\[q=\PPT_{C,q}\cap Q_{f_2}.\]By the surjectivity of the map $ev$, we can assume that one of such general conics passes through $p$ and denote by $C$ as well.

We consider the intersection $\Span(C)\cap \PPT_{X,q}$. It is the projective tangent line $\PPT_{C,q}$. Since the intersection $\PPT_{C,q}\cap \PPT_{X,p}$ is a point on $\PPT_{C,q}$ distinct from $q (=\PPT_{C,q}\cap Q_{f_2})$, the intersection\[\Span(C) \cap \PP^{n-2}= \Span(C) \cap \PPT_{X,q} \cap \PPT_{X,p}=\PPT_{C,q}\cap \PPT_{X,p}\] is not contained in the projective cone $Q_{f_2}$. On the other hand, by Lemma \ref{var}, we know that $\Span(C) \cap \PP^{n-2}$ coincides with $\varphi([C])$ where $[C]\in \F$ is the point parametrizing $C$. Therefore, we conclude that $f_2$ does not vanish at the point $[C]\in \F$. We show the theorem when $X$ is a smooth hypersurface. In general, for a smooth complete intersection $X$, the proof is similar.


\end{proof}
  \textbf{Acknowledgments.} The author is very grateful for his advisor  Professor~de Jong for suggesting this project. The author thanks  Professor~Liu and  Professor~Fedorchuk for pointing out some references. The author also thanks Professor~Starr, Zhiyu Tian and Yi Zhu for the encouragement. In the end, the author gratefully acknowledge the anonymous referee for reading this paper carefully and providing a lot of useful suggestions.


\begin{thebibliography}{0}
\bibitem[ACG11]{GH}
Enrico Arbarello, Maurizio Cornalba, and Pillip~A. Griffiths.
\newblock {\em Geometry of algebraic curves. {V}olume {II}}, volume 268 of {\em
  Grundlehren der Mathematischen Wissenschaften [Fundamental Principles of
  Mathematical Sciences]}.
\newblock Springer, Heidelberg, 2011.
\newblock With a contribution by Joseph Daniel Harris.

\bibitem[AM69]{A}
M.~F. Atiyah and I.~G. Macdonald.
\newblock {\em Introduction to commutative algebra}.
\newblock Addison-Wesley Publishing Co., Reading, Mass.-London-Don Mills, Ont.,
  1969.

\bibitem[Bea95]{BV}
Arnaud Beauville.
\newblock Quantum cohomology of complete intersections.
\newblock {\em Mat. Fiz. Anal. Geom.}, 2(3-4):384--398, 1995.



\bibitem[CS09]{Cubic}
Coskun, Izzet and Starr, Jason.
\newblock {Rational curves on smooth cubic hypersurfaces}.
\newblock Int. Math. Res. Not. IMRN (24)4626--4641, 2009

\bibitem[CZ15]{ZC}
Qile Chen and Yi~Zhu.
\newblock Strong approximation over function fields.
\newblock {\em http://arxiv.org/abs/1510.04647 }  Preprint 2015.

\bibitem[dJHS11]{DS2}
A.~J. de~Jong, Xuhua He, and Jason~Michael Starr.
\newblock Families of rationally simply connected varieties over surfaces and
  torsors for semisimple groups.
\newblock {\em Publ. Math. Inst. Hautes \'Etudes Sci.}, (114):1--85, 2011.

\bibitem[dJS06]{DS}
A.~J. de~Jong and Jason~Michael Starr.
\newblock Low degree complete intersections are rationally simply connected.
\newblock {\em Preprint}, 2006.

\bibitem[FP97]{FP}
W.~Fulton and R.~Pandharipande.
\newblock Notes on stable maps and quantum cohomology.
\newblock In {\em Algebraic geometry---{S}anta {C}ruz 1995}, volume~62 of {\em
  Proc. Sympos. Pure Math.}, pages 45--96. Amer. Math. Soc., Providence, RI,
  1997.

\bibitem[Ful98]{F}
William Fulton.
\newblock {\em Intersection theory}, volume~2 of {\em Ergebnisse der Mathematik
  und ihrer Grenzgebiete. 3. Folge. A Series of Modern Surveys in Mathematics
  [Results in Mathematics and Related Areas. 3rd Series. A Series of Modern
  Surveys in Mathematics]}.
\newblock Springer-Verlag, Berlin, second edition, 1998.

\bibitem[Har66]{H2}
Robin Hartshorne.
\newblock {\em Residues and duality}.
\newblock Lecture notes of a seminar on the work of A. Grothendieck, given at
  Harvard 1963/64. With an appendix by P. Deligne. Lecture Notes in
  Mathematics, No. 20. Springer-Verlag, Berlin, 1966.

\bibitem[Har77]{H}
Robin Hartshorne.
\newblock {\em Algebraic geometry}.
\newblock Springer-Verlag, New York, 1977.
\newblock Graduate Texts in Mathematics, No. 52.

\bibitem[CaK02]{CaJ}
Carolina Araujo and J\'anos Koll\'ar
\newblock {\em Rational Curves on Varieties}.
\newblock Arxiv, preprint.

\bibitem[Has10]{BH}
Brendan Hassett.
\newblock Weak approximation and rationally connected varieties over function
  fields of curves.
\newblock In {\em Vari\'et\'es rationnellement connexes: aspects
  g\'eom\'etriques et arithm\'etiques}, volume~31 of {\em Panor. Synth\`eses},
  pages 115--153. Soc. Math. France, Paris, 2010.

\bibitem[Kol96]{K}
J{\'a}nos Koll{\'a}r.
\newblock {\em Rational curves on algebraic varieties}, volume~32 of {\em
  Ergebnisse der Mathematik und ihrer Grenzgebiete. 3. Folge. A Series of
  Modern Surveys in Mathematics [Results in Mathematics and Related Areas. 3rd
  Series. A Series of Modern Surveys in Mathematics]}.
\newblock Springer-Verlag, Berlin, 1996.

\bibitem[Mat89]{Ma}
Hideyuki Matsumura.
\newblock {\em Commutative ring theory}, volume~8 of {\em Cambridge Studies in
  Advanced Mathematics}.
\newblock Cambridge University Press, Cambridge, second edition, 1989.
\newblock Translated from the Japanese by M. Reid.

\bibitem[Mum77]{D}
David Mumford.
\newblock {\em Stability of projective varieties}.
\newblock L'Enseignement Math\'ematique, Geneva, 1977.
\newblock Lectures given at the ``Institut des Hautes {\'E}tudes
  Scientifiques'', Bures-sur-Yvette, March-April 1976, Monographie de
  l'Enseignement Math{\'e}matique, No. 24.





\bibitem[Mum08]{AVB}
David Mumford.
\newblock {\em Abelian varieties}, volume~5 of {\em Tata Institute of
  Fundamental Research Studies in Mathematics}.
\newblock Published for the Tata Institute of Fundamental Research, Bombay,
  2008.
\newblock With appendices by C. P. Ramanujam and Yuri Manin, Corrected reprint
  of the second (1974) edition.

\bibitem[Ser06]{deform}
Edoardo Sernesi.
\newblock {\em Deformations of algebraic schemes}, volume 334 of {\em
  Grundlehren der Mathematischen Wissenschaften [Fundamental Principles of
  Mathematical Sciences]}.
\newblock Springer-Verlag, Berlin, 2006.




\end{thebibliography}
\end{document}